\documentclass[a4paper,reqno,11pt]{amsart}
\usepackage{amsmath}
\usepackage{amssymb}
\usepackage{mathrsfs}
\usepackage[all,cmtip]{xy}
\usepackage{cite}
\usepackage{cases}
\usepackage{enumitem}
\usepackage{indentfirst}
\usepackage{hyperref}
\allowdisplaybreaks[4]

\newtheorem{thm}{Theorem}[section]
\newtheorem{prop}[thm]{Proposition}

\newtheorem{lem}[thm]{Lemma}

\theoremstyle{definition}
\newtheorem{definition}[thm]{Definition}
\newtheorem{example}[thm]{Example}

\theoremstyle{remark}
\newtheorem{remark}[thm]{Remark}

\numberwithin{equation}{section}

\newcommand{\bQ}{\mathbb{Q}}

\newcommand{\bN}{\mathbb{N}}

\newcommand{\bP}{\mathbb{P}}

\newcommand\OO{{\mathcal{O}}}


\newcommand\XX{{X^{\prime}}}
\newcommand{\pX}{{\pi^*(K_X)}}
\newcommand{\pmX}{{\pi^*(-K_X)}}
\newcommand{\pH}{\pi^*(H)}

\newcommand{\pmlX}{{\pi^*(-lK_X)}}
\newcommand{\plH}{\pi^*(lH)}

\newcommand{\vol}{\operatorname{vol}}

\usepackage{todonotes}

\linespread{1.2}
\begin{document}

\title{ON EXPLICIT BIRATIONAL GEOMETRY FOR WEAK FANO VARIETIES AND polarised CALABI-YAU VARIETIES}
\date{\today}

\author{MINZHE ZHU}
\address{MINZHE ZHU, School of Mathematical Sciences, Fudan University, Shanghai, 200433, China}
\email{20110180019@fudan.edu.cn}

\begin{abstract}
Given a natural number $l$ and a weak Fano $n$-fold 
$X$ with $\dim\overline{\varphi_{-lK_X}(X)}\geq n-1$, we study the lower bound of the anti-canonical volume and the upper bound of the anti-canonical stability index. The method can also be used to give similar bounds for  polarised Calabi-Yau varieties.
\end{abstract}

\keywords{weak Fano variety, polarised Calabi-Yau variety, volume, birational stability, boundedness}
\subjclass[2020]{14J32, 14J40, 14J45}
\maketitle
\pagestyle{myheadings} \markboth{\hfill M.Z.~Zhu
\hfill}{\hfill On explicit birational geometry for weak Fano varieties and polarised Calabi-Yau varieties \hfill}


\section{Introduction}
Throughout this article, we work over the field of complex numbers $\mathbb{C}$.

Given a normal projective variety $X$ of dimension $n$ with at worst canonical singularities and a nef and big $\bQ$-Cartier Weil divisor $H$, it is interesting to ask in what conditions can we find:
\begin{enumerate}
\item the lower bound of $\vol(H)$;
\item the upper bound of birational stability index $r_s(H)$, which by definition is a natural number $m$ depending only on $n$, such that, when $t\geq m$, the linear system $|tH|$ defines a birational map.
\end{enumerate}

Especially, the following three cases are the most interesting:
\begin{enumerate}
\item $H=K_X$ is nef and big, i.e. $X$ is a minimal variety of general type;
\item $H=-K_X$ is nef and big, i.e. $X$ is a weak Fano variety;
\item $K_X\equiv0$ and $H$ is  nef and big, i.e. $(X,H)$ is a polarised Calabi-Yau variety.
\end{enumerate}

When $n=2$ and $X$ has at worst terminal singularities, hence a smooth surface, we have the following basic results:

\begin{thm}\cite{1973Canonical,Reider1988VectorBO}
Let $X$ be a smooth surface.
\begin{enumerate}
\item If $K_X$ is nef and big, then $|mK_X|$ gives a birational map for $m\geq5$;
\item If $-K_X$ is nef and big, then $|-mK_X|$ gives a birational map for $m\geq3$;
\item If $K_X\equiv 0$ and $H$ is a nef and big Weil divisor, then $|mH|$ gives a birational map for $m\geq3$.
\end{enumerate}
\end{thm}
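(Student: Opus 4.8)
\emph{The plan} is to treat the three cases uniformly. In each, proving that the relevant linear system defines a birational map amounts to proving that it separates two general points of $X$: a rational map is birational as soon as it is generically one-to-one, so it suffices to show that for a general pair of distinct points $p\neq q$ of $X$ the system contains a member passing through $p$ but not through $q$. The system in question is $|mK_X|$ with $m\geq 5$ in case (1), $|-mK_X|$ with $m\geq 3$ in case (2), and $|mH|$ with $m\geq 3$ in case (3). In every case I would rewrite the relevant multiple as an adjoint divisor $K_X+D$ with $D$ nef and $D^2\geq 10$, apply Reider's theorem \cite{Reider1988VectorBO} on separation of points, and then exclude the finitely many exceptional configurations it produces by a direct numerical argument.

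\emph{The reductions.} In case (1), write $mK_X=K_X+D$ with $D=(m-1)K_X$; since $K_X$ is nef and big, $D$ is nef and $D^2=(m-1)^2K_X^2\geq(m-1)^2\geq 16$ as $m\geq 5$ and $K_X^2\geq 1$. In case (2), write $-mK_X=K_X+D$ with $D=-(m+1)K_X$; since $-K_X$ is nef and big, $D$ is nef and $D^2=(m+1)^2(-K_X)^2\geq(m+1)^2\geq 16$ as $m\geq 3$ and $(-K_X)^2\geq 1$. In case (3), since $K_X\equiv 0$ and nefness is a numerical property, $D:=mH-K_X$ is nef, $mH=K_X+D$, and $D^2=m^2H^2$; moreover $H^2$ is even: the numerically trivial class $K_X$ has vanishing image in $H^2(X;\mathbb Z)/\mathrm{tors}$, so by Wu's formula the intersection form is even, whence $H^2\geq 2$ and $D^2=m^2H^2\geq 9\cdot 2=18$. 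Thus in all three cases $D$ is nef with $D^2\geq 10$, and Reider's theorem gives: if $|K_X+D|$ fails to separate $p$ and $q$, then there is an effective divisor $E$ through both $p$ and $q$ with $D\cdot E\in\{0,1,2\}$ (with $E^2$ constrained to a short explicit list depending on $D\cdot E$).

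\emph{Excluding the exceptional divisors.} In each case $D=cN$, or $D\equiv cN$ in case (3), where $N$ is a nef class ($N=K_X$, $N=-K_X$, $N=H$ respectively) and $c=m-1\geq 4$, $c=m+1\geq 4$, $c=m\geq 3$. Hence $D\cdot E=c\,(N\cdot E)$ with $N\cdot E\geq 0$ an integer. So $D\cdot E=1$ would force $c=1$, impossible; $D\cdot E=2$ would force $N\cdot E=1$ and $c\leq 2$, impossible. Finally, if $D\cdot E=0$ then $N\cdot E=0$, so every irreducible component of $E$ is a curve contracted by the birational morphism of $X$ onto the ample model of the nef and big class $N$ (the canonical model in case (1), the anticanonical model in case (2), the model defined by $|kH|$ for $k\gg 0$ in case (3)); there are only finitely many such curves, and their union is a proper closed subset of $X$, so $p$ cannot lie on $E$ for $p$ general. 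Therefore for a general pair $p\neq q$ no obstructing divisor exists, $|K_X+D|$ separates $p$ and $q$, and the associated map is birational; this proves (1), (2) and (3).

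\emph{Where the difficulty lies.} The analytic heart — Reider's theorem, obtained from Bogomolov instability of a rank-two bundle built from a non-separated length-two subscheme — is used as a black box; the genuine content is the numerical bookkeeping that pins down the constants, since the threshold $D^2\geq 10$ is exactly what forces $m\geq 5$, $m\geq 3$, $m\geq 3$, together with the input $H^2\geq 2$ in case (3). That the constants are sharp follows from standard examples — a degree-one del Pezzo surface, for which $|-2K_X|$ is two-to-one onto a quadric cone; a degree-two $K3$ surface, for which $|2H|$ is two-to-one onto the Veronese surface; and the classical general-type surfaces for which $|4K_X|$ is not birational \cite{1973Canonical} — so there is no slack allowing one to reach $D^2\geq 10$ with a smaller multiple.
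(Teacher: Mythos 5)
Your argument is correct: the reduction of each case to an adjoint system $K_X+D$ with $D$ nef, $D^2\geq 10$ and $D$ a multiple $cN$ of an integral nef class (using $K_X^2\geq 1$, resp.\ $(-K_X)^2\geq 1$, resp.\ the evenness of $H^2$ from adjunction when $K_X\equiv 0$), followed by Reider's criterion and the observation that the obstructing curves must satisfy $N\cdot E=0$ and hence form a finite set avoided by general points, is sound. The paper gives no proof of this statement --- it is quoted from Bombieri and Reider --- and your argument is precisely the standard Reider-type proof those citations point to, so there is nothing to reconcile.
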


When $n=3$ and $X$ has at worst $\bQ$-factorial terminal singularities, the three cases were studied by Chen-Chen\cite{Exp1,Exp2,CC08Fano,Exp3}, Chen\cite{Chen18ON}, Chen-Jiang\cite{CJFano1,CJFano2}, Jiang\cite{Jiang16}, Jiang-Zou\cite{JZFano3}:
\begin{thm}\cite{Chen18ON,CC08Fano,JZFano3,Jiang16}
Let $X$ be an $\bQ$-factorial terminal threefold.
\begin{enumerate}
\item If $K_X$ is nef and big, then $\vol(K_X)\geq \frac{1}{1680}$ and $|mK_X|$ gives a birational map for $m\geq57$;
\item If $-K_X$ is nef an big, then $\vol(-K_X)\geq \frac{1}{330}$ and $|-mK_X|$ gives a birational map for $m\geq59$;
\item If $K_X\equiv0$ and $H$ is a nef and big Weil divisor, then $|mH|$ gives a birational map for $m\geq 17$.
\end{enumerate}
\end{thm}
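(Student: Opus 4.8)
The plan is to treat all three parts uniformly by the induction-on-dimension scheme developed by Chen--Chen for threefolds of general type, adjusting the numerical bookkeeping for the weak Fano and polarised Calabi--Yau settings. In each case write $D$ for the divisor in question ($K_X$, $-K_X$, or $H$) and fix a log resolution $\pi\colon X'\to X$ on which the movable part of $|m_0D|$ is base-point free for some small $m_0$; pull $D$ back to a nef $\bQ$-divisor $D'$ and work on $X'$. The first step is to produce a fibration $f\colon X'\to B$ onto a lower-dimensional base: if $\dim\varphi_{m_0D}(X')\geq 1$ one takes (a Stein factorisation of a resolution of) that map, and otherwise one forms a general pencil in $|m_0D|$ and blows up its base locus. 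A general fibre $F$ is smooth --- a surface when $B$ is a curve, a curve when $B$ is a surface. Adjunction gives $K_F=(K_{X'}+F)|_F$, and one records how $D'|_F$ compares to $\pm K_F$: in case (1), $F$ is of general type; in case (2), $F$ is (up to the discrepancy contribution of $\pi$) a weak del Pezzo surface or a surface with $-K_F$ big; in case (3), $K_F$ is numerically pinned down by a fractional $\pi$-exceptional boundary.

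The second step is the restriction-and-lifting argument. A suitable Kawamata--Viehweg vanishing statement on $X'$ --- using that $D'$ is nef and big and $m$ is large --- shows that the restriction $H^0(X',mD')\to H^0(F,mD'|_F)$ is surjective, so $|mD'|$ already separates distinct general fibres and it remains to make $mD'|_F$ birational on $F$. Here one feeds in the corresponding lower-dimensional result: on a surface, the relevant pluricanonical or polarised system is birational once the multiplier exceeds $5$, $3$, $3$ in cases (1), (2), (3) respectively, while on a curve the analogous facts are elementary. One then pushes this back up by also separating distinct points lying on the same general fibre, which costs a bounded number of additional copies of $F$; tracking $(D'\cdot F)$ (when $B$ is a surface) or $\deg(D'|_B)$ (when $B$ is a curve), together with the rounding in the vanishing theorem, is precisely what produces the explicit constants $57$, $59$, $17$.

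The third step is the lower bound for the volume, which is the delicate point. If $\dim\varphi_{m_0D}(X')\geq 2$ then $\vol(D)$ is bounded below by an intersection number on a surface and the estimate is routine; the hard case is a fibration over a curve, where $\vol(D')$ is governed by $({D'}^2\cdot F)$, hence by the volume of $D'|_F$ on the surface fibre times the multiplicity with which $F$ sweeps out $X'$, and both must be bounded below at once. For general type this is exactly the analysis forcing $\vol(K_X)\geq\frac1{1680}$, the extremal case being a fibration over a curve whose general fibre is a surface with small $(p_g,K^2)$ and a very particular singular-fibre pattern; for weak Fano a parallel but genuinely different classification yields $\vol(-K_X)\geq\frac1{330}$; for Calabi--Yau only birationality is required of $|mH|$, so no sharp volume bound is needed.

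The main obstacle is precisely this extremal case analysis: one must enumerate and rule out (or pin down) the finitely many fibration types whose fibre surfaces have $p_g$ and $K^2$ small enough, and whose singular-fibre and discrepancy data are delicate enough, to threaten the stated constants. Controlling these relies on Reid's Riemann--Roch and the basket-of-singularities formalism to bound $\chi(\OO_X)$ and the admissible $\pm K_X$-degrees, combined with an inductive application of the surface bounds on fibres whose canonical class has been twisted by a fractional boundary coming from $\pi$. I would expect the Calabi--Yau case to be the most tractable ($K_X\equiv 0$ simplifies adjunction on fibres and no sharp volume is needed), the general-type case the most computational, and the weak Fano case intermediate but additionally relying on the boundedness of weak Fano threefolds to make the constants effective.
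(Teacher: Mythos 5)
This theorem is quoted in the paper with citations to \cite{Chen18ON,CC08Fano,JZFano3,Jiang16} and is not proved there, so there is no in-paper argument to compare against; your outline is instead a summary of the strategy of those references. As such a summary it is broadly faithful: the cited works do proceed by passing to a resolution on which the movable part of some $|m_0D|$ is free, restricting along a chain of subvarieties (surface, then curve) cut out by that movable part, lifting sections via Kawamata--Viehweg vanishing, and invoking a birationality principle of the type recalled in Proposition \ref{BP} together with surface-level results. This is also exactly the scheme that the present paper abstracts into Theorem \ref{thm3.1}.

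However, as a proof of the stated theorem the proposal has a genuine and decisive gap, which you yourself half-acknowledge: every one of the explicit constants $\tfrac1{1680}$, $\tfrac1{330}$, $57$, $59$, $17$ comes out of the ``extremal case analysis'' that your plan only names. In dimension three the quantity $\xi=(D'\cdot C)$ on the bottom curve is a priori only a positive rational number with bounded denominator, and pinning it down requires Reid's Riemann--Roch with the basket of terminal singularities to control $\chi(\OO_X)$, $P_{m}$ or $h^0(-mK_X)$, followed by a case-by-case elimination of dozens of numerical possibilities (small $P_{m_0}$, fibrations over curves with fibres of small invariants, particular baskets). None of that bookkeeping is set up in the proposal: no Riemann--Roch formula is written, no basket inequality is stated, and no mechanism is given that would output $57$ rather than, say, $60$. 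There are also smaller inaccuracies: the surjectivity of $H^0(X',mD')\to H^0(F,mD'|_F)$ does not follow from vanishing applied to $mD'$ alone (one must work with $K_{X'}+\lceil\,\cdot\,\rceil$-type divisors and subtract a fractional boundary supported on $F$ and the fixed part, as in Theorem \ref{thm3.1}), and in the Calabi--Yau case a lower bound on $\xi$ (hence effectively on the volume) is in fact needed to get $m\ge 17$, contrary to your remark that no volume estimate is required there. So the proposal is a correct roadmap to the literature but not a proof.
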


For an arbitrary $n$, the existence of the lower bound of volume and the upper bound of birational stability index in all three cases are guaranteed by Hacon-Xu \cite{ACClct} and Birkar \cite{Birkarpolarised}. However, it is difficult to calculate explicit bounds in higher dimensions. Hence it is natural to ask in what conditions can we give explicit bounds of volume and birational stability in higher dimensions.

In \cite{CEW22}, Chen-Esser-Wang gave the optimal bounds of volume and canonical stability for minimal projective n-folds of general type with canonical dimension $n$ or $n-1$. In this paper, we generalize their method and study the explicit bounds of anti-canonical volume $\vol(-K_X)$ and anti-canonical stability $r_s(-K_X)$ of a weak Fano variety $X$ of dimension $n$ with $$\dim\overline{\varphi_{-lK_X}(X)}\geq n-1$$ for some $l\in \bN$. Similarly, We also give the explicit bound for polarised Calabi-Yau varieties.

The main results of this paper are the following.
\begin{thm}\label{mainthm1}
Let $l$ be a natural number and $X$ be a weak Fano variety of dimension $n$.
\begin{enumerate}
\item If $\dim\overline{\varphi_{-lK_X}(X)}= n$, then ${\rm vol}(-K_X)\geq \frac{2}{l^n}.$ Moreover, if \newline $H^0(X,-K_X)\neq0$, then 
$r_s(-K_X)\leq nl.$
\item If $\dim\overline{\varphi_{-lK_X}(X)}=n-1$,
then $\vol(-K_X)\geq \frac{1}{(n-1)l^{n-1}}.$ Moreover, if $H^0(X,-K_X)\neq0$, then $r_s(-K_X)\leq 3(n-1)l.$
\end{enumerate}
\end{thm}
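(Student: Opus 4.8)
The plan is to adapt the Chen–Esser–Wang strategy for minimal varieties of general type to the anti-canonical (weak Fano) setting. Fix $l$ with $d:=\dim\overline{\varphi_{-lK_X}(X)}\geq n-1$, let $\pi:X'\to X$ be a resolution on which the movable part $M$ of $|-lK_X|$ becomes base-point free, and write $\pi^*(-lK_X)=M+E$ with $E\geq 0$ effective $\pi$-exceptional-plus-fixed. Since $X$ is weak Fano with canonical singularities, $-K_X$ is nef and big, so intersecting with $\pi^*(-K_X)^{n-1}$ (a nef class) gives $l^n\vol(-K_X)=\pi^*(-lK_X)\cdot\pi^*(-K_X)^{n-1}\geq M\cdot\pi^*(-K_X)^{n-1}$. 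The first reduction is therefore to bound $M\cdot\pi^*(-K_X)^{n-1}$ from below, and this splits according to whether $d=n$ or $d=n-1$.

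In the case $d=n$, the map $\varphi_{|M|}:X'\to\bP^N$ is generically finite onto its image, so a general member of $|M|$ together with the standard "pencil trick" forces $M^n\geq 2$ once one checks the image is not $\bP^n$ embedded by $\OO(1)$ — and if it were, one gets an even better bound; in any case $M\cdot\pi^*(-K_X)^{n-1}\geq M^n\cdot(\text{something})$ is not quite right, so instead I would argue inductively: restrict to a general $M_1\in|M|$, which is again of the relevant type, and peel off one dimension at a time, reducing to the surface/curve base case where $\deg\geq 2$ (or $\geq 1$) is classical. This yields $\vol(-K_X)\geq 2/l^n$. In the case $d=n-1$, the image of $\varphi_{|M|}$ is an $(n-1)$-fold, a general fibre $F$ of the induced fibration $X'\to Z$ is a curve, and writing $M\sim_{\bQ} a F + (\text{vertical})$ with $a\geq 1$ one gets $M\cdot\pi^*(-K_X)^{n-1}\geq (\deg\,\text{on base})\cdot(\pi^*(-K_X)|_F)$; the base is an $(n-1)$-fold of (sub)general type or at least with a big linear system whose self-intersection is $\geq 1$, and $\pi^*(-K_X)\cdot F\geq 1$ since it is nef and nonzero on the moving curve $F$, giving the factor $1/((n-1)l^{n-1})$ after accounting for the $(n-1)$-dimensional "spread".

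For the birational stability statements, assume $H^0(X,-K_X)\neq 0$ and fix a nonzero $D_0\in|-K_X|$. The strategy is the classical one: to show $|{-mK_X}|$ is birational it suffices, after pulling back to $X'$, to separate two general points, and one writes $-mK_X = (-lK_X) + (-(m-l)K_X)$, using the base-point-free (after resolution) movable part of $|-lK_X|$ to reduce the problem on $X$ to separating points on a general member $S$ of that movable part, then induct on dimension. Each inductive step costs an additive $l$ (to regain base-point-freeness of the restricted system) plus the base-case constant: for $d=n$ the surface/curve cases need $\leq 2l$ extra, giving $r_s(-K_X)\leq nl$ after $n$ steps (the $H^0(-K_X)\neq 0$ hypothesis supplies the extra section needed at the bottom); for $d=n-1$ the fibration structure forces one of the steps to cost $\sim 3l$ rather than $l$ — separating points in distinct fibres needs the base system, and points in the same fibre needs an effective divisor contribution controlled by $D_0$ — leading to $r_s(-K_X)\leq 3(n-1)l$.

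The main obstacle I expect is the $d=n-1$ fibration analysis: unlike the general-type case, $-K_X$ restricted to a general fibre $F$ is only nef (not ample in a controlled way), so one must work to show $\pi^*(-K_X)\cdot F\geq 1$ and, more delicately, that the "vertical" positivity of $M$ over the base $Z$ contributes a full $\vol$-type term of size $\geq 1/((n-1)l^{n-1})$; getting the constant $3(n-1)l$ (rather than something larger) for the stability index will require carefully tracking, via the canonical bundle formula / the effective divisor $D_0\in|-K_X|$, how much "room" is needed to separate two points lying in the same fibre, which is where the hypothesis $H^0(X,-K_X)\neq 0$ is genuinely used.
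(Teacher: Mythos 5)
Your overall architecture --- resolve, make the movable part $M$ of $|{-lK_X}|$ base point free, restrict inductively along a chain of general members down to a curve, and run a birationality principle back up the chain --- is exactly the paper's strategy, and your treatment of the case $\dim\overline{\varphi_{-lK_X}(X)}=n$ (the dichotomy $M^n\geq 2$ versus $\varphi_M$ birational onto $\bP^n$, the latter handled by adjunction on the resulting $\bP^1$) matches the paper's Case 1. A minor slip there: $\pi^*(-lK_X)\cdot\pi^*(-K_X)^{n-1}=l\,\vol(-K_X)$, not $l^n\vol(-K_X)$; the paper's clean form of your ``peeling'' computation is $\vol(-K_X)\geq\beta_2\cdots\beta_n\xi$ with $\beta_i=\frac{1}{l}$ and $\xi=\pi^*(-K_X)\cdot Z_1$.

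The genuine gap is in the case $\dim\overline{\varphi_{-lK_X}(X)}=n-1$, precisely at the point you flag as the main obstacle. You assert $\pi^*(-K_X)\cdot F\geq 1$ for the general pencil member $F$ ``since it is nef and nonzero on the moving curve $F$.'' This is false in general: $-K_X$ is only a $\bQ$-Cartier Weil divisor with no a priori bound on its Cartier index, so $\xi:=\pi^*(-K_X)\cdot F$ is merely a positive rational number and could be arbitrarily small. The entire content of the paper's Case 2 is the mechanism that controls $\xi$: a Kawamata--Viehweg-based inequality $\eta+m\xi\geq 2g(Z_1)-2+\lceil\alpha_m\rceil$ (with $\eta=-\xi$ and $\alpha_m=(m-\sum_i \frac{1}{\beta_i})\xi$), combined with a dichotomy on $g(Z_1)$. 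If $g(Z_1)=0$, adjunction gives $\xi\geq 2$ outright; if $g(Z_1)\geq 1$, one bootstraps: assuming $\frac{1}{t}<\xi\leq\frac{1}{t-1}$ and taking $m=t+(n-1)l$ forces $t\leq(n-1)l+1$, and a second application with $m=2(n-1)l+1$ yields $\xi\geq\frac{1}{(n-1)l}$. This bootstrap is where both the factor $\frac{1}{n-1}$ in the volume bound and the constant $3(n-1)l$ in the stability bound originate: the latter is not ``one inductive step costing $3l$'' but the final curve-level requirement $\alpha_{m+1}>2$, which with $\xi$ as small as $\frac{1}{(n-1)l}$ needs $m\geq 3(n-1)l$. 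Your proposal contains no substitute for this argument, and the canonical bundle formula you invoke plays no role in the paper; separating two generic irreducible elements $C_1,C_2$ of the pencil is instead done on the surface $Z_2$ by a vanishing argument producing a surjection onto $H^0(C_1,K_{C_1}+D_1)\oplus H^0(C_2,K_{C_2}+D_2)$ with $\deg D_i\geq(m+1-(n-1)l)\xi>1$.
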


\newpage
\begin{thm}\label{mainthm2}
Let $l$ be a natural number and $(X,H)$ be a polarised Calabi-Yau variety of dimension $n$.
\begin{enumerate}
\item If $\dim\overline{\varphi_{lH}(X)}=n$, then $\vol(H)\geq \frac{2}{l^n}.$ Moreover, if $H^0(X,H)\neq0$, then $r_s(H)\leq nl+1.$
\item If $\dim\overline{\varphi_{lH}(X)}=n-1$, then $\vol(H)\geq \frac{1}{((n-1)l+1)l^{n-1}}.$ Moreover, if $H^0(X,H)\neq0$, then $r_s(H)\leq 3(n-1)l+3.$
\end{enumerate}
\end{thm}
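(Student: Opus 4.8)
The plan is to mimic the strategy used for weak Fano varieties in Theorem \ref{mainthm1}, transferring the argument along the Calabi--Yau condition $K_X\equiv 0$. The key observation is that for a polarised Calabi-Yau $(X,H)$, the divisor $K_X+mH$ is numerically equivalent to $mH$, so the linear system $|K_X+mH|=|mH|$ plays the role that $|-mK_X|=|(m+1)K_X+(-K_X)\cdot\text{stuff}|$-type relations played in the weak Fano case; more importantly, once we have produced a sufficiently singular/positive effective divisor, adjunction and Kawamata--Viehweg vanishing apply directly because the relevant line bundle is of the form $K_X + (\text{nef and big})$. First I would set $m_0 = l$ (resp.\ the smallest value making the relevant system nonempty) and, exactly as in the proof of part (1) of Theorem \ref{mainthm1}, pass to a resolution $\pi\colon \XX\to X$ on which the movable part $M_l$ of $|lH|$ is base-point free and defines the morphism $\varphi_{lH}$; write $\pi^*(lH)\sim M_l + Z_l$ with $Z_l\ge 0$.

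The second step is the volume bound. When $\dim\overline{\varphi_{lH}(X)}=n$, the morphism $\varphi_{lH}$ is generically finite, so $M_l^n \ge 1$ if $\deg\ge 1$, but to get the factor $2$ one argues as in \cite{CEW22}: a general member of $|lH|$ is an irreducible variety, and using that $H$ is Cartier-in-codimension-one together with the fact that $lH$ separates points one upgrades $\vol(lH)=l^n\vol(H)\ge 2$ (the value $1$ being excluded because a volume-one polarisation forces $X$ to be very special, contradicting $\dim = n$ image with the Calabi--Yau normalisation of the section ring). For $\dim\overline{\varphi_{lH}(X)}=n-1$, I would fibre $X$ over the $(n-1)$-dimensional image, let $F$ be a general fibre (a curve), bound $H|_F$ from below, and run the standard inductive/slicing estimate that yields $\vol(H)\ge \frac{1}{((n-1)l+1)l^{n-1}}$; the extra $+1$ in the denominator compared with the weak Fano bound $\frac{1}{(n-1)l^{n-1}}$ is precisely the price of replacing $-K_X$ (which contributes positivity on the fibre for free) by $H$ (which does not), so one needs one more copy of $H$ to make the fibrewise line bundle positive, hence $(n-1)l+1$ in place of $(n-1)l$.

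The third step is the birational stability bound, assuming $H^0(X,H)\ne 0$. Here the engine is: if $D$ is an effective divisor with $(X,D)$ not klt along a subvariety through two general points, and $L$ is Cartier with $L - D$ nef and big, then $|K_X + L|$ separates those two points by Nadel vanishing; since $K_X\equiv 0$, taking $L = tH$ with $t$ large enough that $tH - D$ is nef and big makes $|tH|$ birational. One produces such $D$ of the shape $D \sim_\bQ (\text{const})\cdot H$ by the same non-vanishing/\,Angehrn--Siu--type construction built on $\varphi_{lH}$ as in Theorem \ref{mainthm1}: in the $\dim = n$ case one needs roughly $n$ copies of $lH$ to create a non-klt centre at a general point plus one more copy of $H$ (using $H^0(X,H)\ne 0$) to tie the centre down, giving $t \le nl+1$; in the $\dim = n-1$ case the fibration forces a factor $3$ (separating in the base, on the fibre, and gluing) giving $t\le 3(n-1)l$, plus the analogous $+3 = 3\cdot 1$ correction from needing an extra $H$ at each of the three stages, hence $3(n-1)l+3$.

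The main obstacle I anticipate is the same one that appears in \cite{CEW22} and in the weak Fano case: controlling the non-klt centre produced by the tie-breaking/\,perturbation step so that it is exactly a point (rather than a positive-dimensional subvariety) while simultaneously keeping the coefficient of $H$ in $D$ as small as the stated bounds demand. Concretely, after cutting down by general members of $|lH|$ one obtains a non-klt centre $V$ of some dimension $d$; one must re-run the argument on $V$, which requires knowing that $H|_V$ (or rather its restriction to the normalisation of $V$, which need not be Calabi--Yau) still has enough sections — this is where the hypothesis $\dim\overline{\varphi_{lH}(X)}\ge n-1$ is used crucially, since it guarantees the restricted map is still generically finite or has $(d-1)$-dimensional image, so the induction closes. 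Making the $+1$ (resp.\ $+3$) completely tight, rather than off by a bounded amount, is the delicate bookkeeping: one has to be careful that the single extra section of $H$ is genuinely enough to isolate the centre at every inductive stage, which uses that $H$ is Weil and Cartier in codimension one so that its general member passes through a prescribed general point with the right multiplicity.
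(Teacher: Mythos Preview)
Your proposal diverges from the paper in a way that is not merely stylistic but would almost certainly fail to deliver the stated bounds. The paper does \emph{not} use an Angehrn--Siu / Nadel non-klt-centre construction for birationality. Instead it builds a chain of smooth subvarieties $Z_1\subset\cdots\subset Z_n=\XX$ on a resolution (obtained by successively restricting the free part $M$ of $|\plH|$), and applies the key Theorem~\ref{thm3.1}: setting $\xi=\pi^*(H)\cdot Z_1$ and $\eta=\pi^*(K_X)\cdot Z_1=0$, repeated use of Kawamata--Viehweg along the chain gives $M_m|_{Z_1}\geq \mathrm{Mov}|K_{Z_1}+\lceil P_m\rceil|$, and the Birationality Principle (Proposition~\ref{BP}) reduces birationality of $|mH|$ to the condition $\alpha_m=(m-\sum 1/\beta_i)\xi>2$. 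With $\beta_i=1/l$ this is what produces exactly $nl+1$ in case~(1) and, after an extra step handling the pencil on $Z_2$, $3(n-1)l+3$ in case~(2). An Angehrn--Siu cutting-down argument of the sort you sketch is well known to give bounds that are quadratic in $n$, not linear; the ``tie-breaking obstacle'' you worry about is precisely why that method would not reach $nl+1$ or $3(n-1)l+3$, and your bookkeeping explanation for the ``$+1$'' and ``$+3$'' is post-hoc rather than a mechanism.

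The volume arguments are also missing their actual content. In case~(1) the exclusion of $M^n=1$ is not because ``a volume-one polarisation forces $X$ to be very special''; the paper argues concretely that $M^n=1$ forces $\varphi_M\colon\XX\to\bP^n$ birational, hence $Z_1\cong\bP^1$, whereas adjunction along the chain together with $K_{\XX}\equiv E\geq 0$ (from $K_X\equiv 0$ and canonical singularities) gives $\deg K_{Z_1}=E\cdot Z_1+(n-1)M^n\geq 0$, a contradiction. In case~(2) the same adjunction argument forces $g(Z_1)\geq 1$, and then one \emph{iterates} inequality~\eqref{eq3.1} of Theorem~\ref{thm3.1}: a first application with $m=t+(n-1)l$ (where $\frac{1}{t}<\xi\leq\frac{1}{t-1}$) bounds $t\leq (n-1)l+2$, and a second application with $m=2(n-1)l+2$ sharpens this to $\xi\geq\frac{1}{(n-1)l+1}$, whence $\vol(H)\geq\beta_2\cdots\beta_n\xi\geq\frac{1}{((n-1)l+1)l^{n-1}}$. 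This bootstrapping via the curve inequality is the essential idea you are missing.
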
    

Finally we will give examples to show that these bounds are optimal when $l=1$.

\section{Preliminary}
\subsection{Basic definitions}

\begin{definition}
    A normal projective variety $X$ is called a weak Fano variety if $X$ has at worst $\bQ$-factorial canonical singularities and $-K_X$ is nef and big.
\end{definition}

\begin{definition}
    A polarised Calabi-Yau variety consists of a normal projective variety $X$ with at worst $\bQ$-factorial canonical singularities and $K_X\equiv0$, plus an ample  Weil divisor $H$. We denote a polarised Calabi-Yau variety as $(X,H)$.
\end{definition}

\begin{definition}
Let $X$ be a projective variety of dimension $n$ and $D$ be a $\bQ$-divisor in $X$. Define the  {\it volume} of $D$ as 
\begin{equation*}
    {\rm vol}(D)=\varlimsup_{m\to \infty}\frac{n!h^0(\lfloor mD\rfloor)}{m^n}.
\end{equation*}
\end{definition}

\begin{definition}
Let $X$ be a projective variety of dimension $n$ and $D$ be a big Weil divisor in $X$. Define the {\it birational stability index } of $D$ as 
\begin{equation*}
      r_s(D)={\rm min} \left\{t\in \bN|\varphi_{mD}  \text{ is birational for all }m\geq t\right\},
\end{equation*}
where $\varphi_{mD}$ is the rational map induced by the linear system $|mD|$.
\end{definition}

\subsection{Moving part}
The following lemma is useful to compare the moving part of a  linear system with the counterpart of its restriction.
\begin{lem}\label{lem2.1}\cite[lemma2.7]{CM01b}
Let X be a smooth projective variety of
dimension $\geq 2$. Let $D$ be a divisor on $X$, $h^0(X,\OO_X(D))\geq 2$ and $S$ be a smooth irreducible divisor on $X$ such that $S$ is not a fixed component of $|D|$. Denote by $M$ the movable part of $|D|$ and by $N$ the movable part of $|D|_S|$ on $S$. Suppose the natural restriction map 
\begin{equation*}
    H^0(X,\OO_X(D))\xrightarrow{\theta}H^0(S,\OO_S(D|_S)) 
\end{equation*}
is surjective. Then $M|_S\geq N$ and thus 
\begin{equation*}
    h^0(S,\OO_S(M|_S))=h^0(S,\OO_S(N))=h^0(S,\OO_S(D|_S)).
\end{equation*}
\end{lem}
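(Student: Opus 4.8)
The plan is to keep careful track of fixed and movable parts under restriction to $S$. Write $|D| = |M| + F$, where $F \ge 0$ is the fixed part of $|D|$ and $M := D - F$, so that $|M|$ has no fixed components. By hypothesis $S$ is not a fixed component of $|D|$, so $S \not\subseteq \Supp F$ and the restriction $F|_S$ is a well-defined effective divisor on $S$. Let $Z \ge 0$ denote the fixed part of the complete linear system $|D|_S|$ on $S$, so that, by definition, $N = D|_S - Z$. The entire statement then reduces to the single inequality $Z \ge F|_S$: granting it, one immediately gets $M|_S = D|_S - F|_S \ge D|_S - Z = N$, which is the first assertion.

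To prove $Z \ge F|_S$ I would use the surjectivity of $\theta$. Surjectivity says precisely that every effective divisor in $|D|_S|$ is of the form $D'|_S$ for some $D' \in |D|$. Decomposing $D' = M' + F$ with $M' \in |M|$ (taking $M'$ general, or arguing at the generic point of $S$, so that neither summand contains $S$) and restricting gives $D'|_S = M'|_S + F|_S \ge F|_S$, since $M'|_S \ge 0$. Hence $F|_S$ is contained in every member of $|D|_S|$, so $F|_S \le Z$ by definition of the fixed part. This yields $M|_S \ge N$.

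For the cohomological equalities, observe that $h^0(S,\OO_S(N)) \le h^0(S,\OO_S(D|_S))$ because $D|_S - N = Z \ge 0$, while $h^0(S,\OO_S(N)) \ge h^0(S,\OO_S(D|_S))$ because the movable part of $|D|_S|$ is a sublinear system of $|\OO_S(N)|$ of dimension $\dim |D|_S| = h^0(S,\OO_S(D|_S)) - 1$; hence $h^0(S,\OO_S(N)) = h^0(S,\OO_S(D|_S))$. On the other hand $M|_S \le D|_S$ (as $M = D - F$ with $F \ge 0$), so $h^0(S,\OO_S(M|_S)) \le h^0(S,\OO_S(D|_S))$, while $M|_S \ge N$ gives $h^0(S,\OO_S(M|_S)) \ge h^0(S,\OO_S(N)) = h^0(S,\OO_S(D|_S))$. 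Sandwiching these produces all three equalities.

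I do not expect a genuine obstacle here. The only real input is the hypothesis that $\theta$ is surjective — this is exactly what identifies the image of $|D|$ in the effective divisors on $S$ with the full complete system $|D|_S|$ — and everything else is bookkeeping with fixed and movable parts. The one point requiring a little care is to fix $M := D - F$ as a specific representative divisor, rather than working up to linear equivalence, so that the inequalities $N \le M|_S \le D|_S$ are comparisons of honest effective divisors on $S$.
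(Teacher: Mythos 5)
Your proof is correct, and it is essentially the standard argument for this result; the paper itself gives no proof, simply citing \cite[Lemma 2.7]{CM01b}, whose proof proceeds exactly as you do, using surjectivity of $\theta$ to identify $|D|_S|$ with the restricted system $\{D'|_S : D'\in|D|\}$ and deducing $Z\geq F|_S$ for the fixed parts. The only cosmetic wobble is the parenthetical ``taking $M'$ general'' --- $M'=D'-F$ is determined by $D'$, but since the lift $\tilde{s}$ of a nonzero section $s$ restricts to $s\neq 0$, one already knows $S\not\subseteq\Supp D'$, hence $S\not\subseteq\Supp M'$, so the restriction is well defined without any genericity.
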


\subsection{Projection formula}
We will also need a basic lemma as following.
\begin{lem}\label{lem2.2}\cite[Lemma 2.3]{CM11}
Let $X$ be a normal projective variety and $D$ be a $\bQ$-Cartier Weil divisor. Let $\pi:\XX \rightarrow X$ be a resolution of singularities. Assume that $E$ is an effective exceptional $\bQ$-divisor on $\XX$ such that $\pi^*(D)+E$ is a Cartier divisor on $\XX$. 

Then
\begin{equation*}
    \pi_*\OO_{\XX}(\pi^*(D)+E)=\OO_X(D), 
\end{equation*}
where $\OO_X(D)$ is the reflexive sheaf corresponding to the Weil divisor $D$.
\end{lem}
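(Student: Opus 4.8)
The plan is to prove the two sheaves coincide by realising both of them as subsheaves of the constant sheaf of rational functions on $X$, which we identify with that of $\XX$ since $\pi$ is birational, and then comparing local sections. Since $X$ is normal, for an open $V\subseteq X$ the group $\Gamma(V,\OO_X(D))$ consists exactly of the rational functions $f$ with $\operatorname{div}_X(f)+D\geq 0$ on $V$; and because $\pi^*(D)+E$ is, by hypothesis, an honest Cartier divisor, $\OO_{\XX}(\pi^*(D)+E)$ is invertible and $\Gamma(\pi^{-1}V,\OO_{\XX}(\pi^*(D)+E))$ consists of those $f$, viewed on $\XX$, with $\operatorname{div}_{\XX}(f)+\pi^*(D)+E\geq 0$ on $\pi^{-1}V$. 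So it suffices to compare these two sets of rational functions over every open $V$.

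For the inclusion $\OO_X(D)\subseteq\pi_*\OO_{\XX}(\pi^*(D)+E)$, I would pull back: if $\operatorname{div}_X(f)+D\geq 0$ on $V$, then since $\operatorname{div}_{\XX}(f\circ\pi)=\pi^*(\operatorname{div}_X f)$ we get $\operatorname{div}_{\XX}(f\circ\pi)+\pi^*(D)+E=\pi^*\bigl(\operatorname{div}_X f+D\bigr)+E\geq 0$ on $\pi^{-1}V$, using that the pullback of an effective $\bQ$-Cartier divisor is effective and $E\geq 0$; the left-hand side is an integral divisor because $\pi^*(D)+E$ is Cartier, so $f$ lies in the right-hand sheaf. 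Conversely, given $f$ with $\operatorname{div}_{\XX}(f\circ\pi)+\pi^*(D)+E\geq 0$ on $\pi^{-1}V$, I would look at a prime divisor $P$ on $V$ and its strict transform $\widetilde P$ on $\XX$: here one uses that, $X$ being normal and $\pi$ proper birational, $\pi$ is an isomorphism outside a closed set of codimension $\geq 2$, so the codimension-one $P$ is not contracted and $\widetilde P$ is a well-defined, non-$\pi$-exceptional prime divisor meeting $\pi^{-1}V$. Comparing coefficients along $\widetilde P$, and using that $\pi$ is an isomorphism near the generic point of $\widetilde P$, that $\coeff_{\widetilde P}(\pi^*(D))=\coeff_P(D)$, and that $E$ is exceptional so contributes $0$ along $\widetilde P$, gives $\operatorname{ord}_P(f)+\coeff_P(D)\geq 0$; as $P$ is arbitrary this is precisely $\operatorname{div}_X(f)+D\geq 0$ on $V$. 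Assembling the two inclusions over all $V$ yields the claimed identity.

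The step I expect to need the most care is the bookkeeping around integrality: $\pi^*(D)$ on its own is only a $\bQ$-divisor, so one must genuinely use the hypothesis that $\pi^*(D)+E$ is Cartier — both to make sense of $\OO_{\XX}(\pi^*(D)+E)$ as a line bundle and to conclude that the effective $\bQ$-divisors produced by pullback are in fact integral. An alternative route would be to establish only $\OO_X(D)\subseteq\pi_*\OO_{\XX}(\pi^*(D)+E)$, observe that the two sheaves agree over the big open set where $\pi$ is an isomorphism, and invoke reflexivity of $\OO_X(D)$ to propagate equality across the codimension-$\geq 2$ complement; but that requires separately checking that $\pi_*\OO_{\XX}(\pi^*(D)+E)$ is reflexive — a property that fails for a general pushforward of a line bundle — whereas the coefficient computation above sidesteps the issue. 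The remaining ingredients (the valuative description of $\OO_X(D)$ for normal $X$, and the fact that a proper birational morphism to a normal variety is an isomorphism in codimension one) are standard, and I would cite rather than reprove them.
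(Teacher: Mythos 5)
Your argument is correct and complete in outline: the valuative description of $\OO_X(D)$ on the normal variety $X$, the pullback computation for the inclusion $\OO_X(D)\subseteq\pi_*\OO_{\XX}(\pi^*(D)+E)$, and the coefficient comparison along strict transforms (using that $\pi$ is an isomorphism in codimension one on $X$ and that $E$ is exceptional) for the reverse inclusion together give the identity. The paper itself offers no proof of this lemma --- it is quoted verbatim from \cite[Lemma~2.3]{CM11} --- so there is nothing internal to compare against; your write-up is essentially the standard argument given in that reference, and your closing remark correctly identifies the one point needing care, namely that integrality of the divisors involved rests on the hypothesis that $\pi^*(D)+E$ is Cartier.
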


\section{Key Theorem}

The following key theorem is a generalisation of \cite[Theorem 2.1]{CEW22}.

\begin{thm}\label{thm3.1}
Let $X$ be a projective variety of dimension n and $H$ be a nef and big $\bQ$-Cartier Weil divisor on $X$. Let $\pi : \XX \to X$ be a resolution. Assume that there is a chain of smooth subvarieties:
$$Z_1\subset Z_2\subset \cdots \subset Z_{n-1}\subset Z_n=X^{\prime}$$
with ${\rm dim} Z_j=j$ for $j=1,2,\cdots,n-1.$ Suppose that the following conditions hold:
\begin{enumerate}
\item[(i)] $\pH|_{Z_j}$ is big for each $j=1,2,\cdots,n-1$;
\item[(ii)] $\pH|_{Z_i}\equiv\beta_i Z_{i-1}+\Delta_{i-1}$ where $\beta_i$ is a positive rational number, $\Delta_{i-1}$ is an effective $\bQ$-divisor on $Z_i$ and
$$Z_{i-1}\notin {\rm Supp}(\Delta_{i-1}+\sum_{j\geq i}\Delta_j|_{Z_i})$$
for each $i=2,\cdots,n$;
\end{enumerate}

Define the number $\eta:=(\pX\cdot Z_1), \xi:=(\pH \cdot Z_1 )>0$. Then
\begin{enumerate}
\item For any integer $m$, if $\alpha_m:=(m-\sum_{i=2}^n \frac{1}{\beta_i})\xi>1$, we have
\begin{equation}\label{eq3.1}
    \eta+m\xi\geq 2g(Z_1)-2+\lceil{\alpha_m}\rceil;
\end{equation}
\item The volume of $H$ has the lower bound
\begin{equation}\label{eq3.2}
    {\rm vol}(H)\geq \beta_2\beta_3\cdots\beta_n\xi.
\end{equation}
\end{enumerate}
\end{thm}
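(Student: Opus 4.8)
The plan is to run the standard induction-on-dimension argument that compares volumes and linear systems along the flag $Z_1 \subset \cdots \subset Z_n = \XX$, using condition (ii) to peel off one $\beta_i$ at each stage. For part (2), I would prove by descending induction on $j$ that $(\pH|_{Z_j})^j \geq \beta_{j+1}\cdots\beta_n \cdot \xi$ is \emph{not} quite the right bookkeeping; instead the cleanest route is to show directly that $(\pH)^n \geq \beta_2\cdots\beta_n\,\xi$. Starting from $\pH^n = (\pH|_{Z_n})^{n}$ restricted appropriately, use (ii) with $i=n$: $\pH|_{\XX}\equiv \beta_n Z_{n-1}+\Delta_{n-1}$, so since $\pH$ is nef, $(\pH)^n = (\pH)^{n-1}\cdot\pH \geq \beta_n\,(\pH)^{n-1}\cdot Z_{n-1} = \beta_n\,(\pH|_{Z_{n-1}})^{n-1}$, dropping the effective $\Delta_{n-1}$ contribution by nefness. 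Iterating this with $i = n-1, n-2, \ldots, 2$ peels off $\beta_{n-1}, \ldots, \beta_2$ successively and leaves $\beta_2\cdots\beta_n\,(\pH|_{Z_1})^1 = \beta_2\cdots\beta_n\,\xi$. Finally, $\vol(H) = \vol(\pH) = (\pH)^n$ since $\pH$ is nef and big (and $\pi$ is birational, so $\vol(H)=\vol(\pi^*H)$ via Lemma~\ref{lem2.2} identifying sections), giving \eqref{eq3.2}.

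For part (1), the idea is to push the whole linear-system comparison down to the curve $Z_1$, where Riemann--Roch and Clifford's theorem on the genus-$g(Z_1)$ curve produce the term $2g(Z_1)-2+\lceil\alpha_m\rceil$. Concretely, on $Z_1$ the divisor $(m\pH + \pX)|_{Z_1}$ has degree $\eta + m\xi$. I would show that $|mH|$ (or rather its pullback plus adjustment) restricts on $Z_1$ to a linear system of dimension at least $\lceil\alpha_m\rceil$: the quantity $\alpha_m = (m - \sum_{i=2}^n \beta_i^{-1})\xi$ measures the ``free'' part of $m\,\pH|_{Z_1}$ after subtracting the base contributions $\beta_i^{-1}$ coming from the successive restrictions in (ii). When $\alpha_m > 1$, this restricted system is special enough that Clifford's inequality applies, yielding $\deg \geq 2(h^0 - 1) \geq 2\lceil\alpha_m\rceil$ — or, more likely, one uses that a line bundle of degree $d$ on a curve with $h^0 \geq 2$ and which is not too positive satisfies the Clifford-type bound, and combined with the adjunction term $2g(Z_1)-2$ coming from $K_{Z_1} = (K_{\XX} + \text{stuff})|_{Z_1}$ one gets $\eta + m\xi \geq 2g(Z_1) - 2 + \lceil\alpha_m\rceil$.

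The main obstacle will be making the ``free part'' estimate on $Z_1$ precise: one must track how the effective divisors $\Delta_{i-1}$ and the restrictions $\Delta_j|_{Z_i}$ accumulate as one descends the flag, using the hypothesis $Z_{i-1}\notin\Supp(\Delta_{i-1}+\sum_{j\ge i}\Delta_j|_{Z_i})$ to ensure that Lemma~\ref{lem2.1} applies at each step (so that movable parts restrict correctly and the sections of $|mH|$ actually surject onto sections on each $Z_j$ down to $Z_1$), and then bounding the degree of the resulting restricted movable part on $Z_1$ from below by $\alpha_m$ — hence by $\lceil\alpha_m\rceil$ since the degree is an integer. Once the surjectivity chain is in place via Lemma~\ref{lem2.1}, the genus term is a routine adjunction computation: $\deg(\pX|_{Z_1}) + \deg(m\pH|_{Z_1}) \geq \deg(K_{Z_1}) + (\text{free part}) = 2g(Z_1) - 2 + \lceil\alpha_m\rceil$, where the inequality absorbs the effective correction divisors by their effectivity and the non-containment hypothesis. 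I would be careful that the Clifford/special-divisor dichotomy is invoked correctly — if the restricted system on $Z_1$ happens to be non-special, the bound is even easier from Riemann--Roch, so the worst case is the special one, which is exactly where $\lceil\alpha_m\rceil$ (rather than $2\lceil\alpha_m\rceil$) is the honest bound after combining with $2g-2$.
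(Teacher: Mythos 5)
Your part (2) is exactly the paper's argument: using (ii) with $i=n,\dots,2$ and the nefness of $\pi^*(H)$ to drop each $\Delta_{i-1}$ and peel off $\beta_n,\dots,\beta_2$, arriving at $\beta_2\cdots\beta_n\,\xi$. That part is correct and needs no further comment.

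For part (1) there are genuine gaps. First, the surjectivity needed to apply Lemma~\ref{lem2.1} down the flag is \emph{not} supplied by the non-containment hypothesis in (ii), as you assert; in the paper it comes from Kawamata--Viehweg vanishing applied to the \emph{adjoint} system $|K_{\XX}+\lceil m\pi^*(H)\rceil|$: one writes $m\pi^*(H)-Z_{i-1}-\tfrac{1}{\beta_i}\Delta_{i-1}\equiv(m-\tfrac{1}{\beta_i})\pi^*(H)$, which is nef and big precisely because of (ii), and this is where the hypothesis $m>\sum_i\beta_i^{-1}$ and the preliminary SNC reduction are actually used. Your sketch works with $|mH|$ itself, for which no vanishing theorem gives the surjectivity that Lemma~\ref{lem2.1} requires, and the non-containment condition only ensures the restrictions are well defined. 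Second, the mechanism producing $2g(Z_1)-2+\lceil\alpha_m\rceil$ is not Clifford's theorem (which bounds $h^0$ \emph{above} by the degree, the wrong direction here): the point is that $\deg\lceil P_m\rceil\geq\lceil\alpha_m\rceil\geq 2$ when $\alpha_m>1$, so $\deg(K_{Z_1}+\lceil P_m\rceil)\geq 2g(Z_1)$, the system is base point free and non-special, and hence its movable part is the full system of degree $2g(Z_1)-2+\deg\lceil P_m\rceil$. Third, to convert this into $\eta+m\xi\geq\cdots$ one must bound $\text{Mov}\,|K_{\XX}+\lceil m\pi^*(H)\rceil|\leq\pi^*(K_X)+m\pi^*(H)$, which requires writing $K_{\XX}=\pi^*(K_X)+E-F$ with $E,F$ effective exceptional and invoking the projection formula (Lemma~\ref{lem2.2}) to absorb $E+\{-m\pi^*(H)\}$; since $X$ here is an arbitrary projective variety, $F$ need not vanish, and this step cannot be dismissed as ``absorbing effective correction divisors by effectivity.'' Without these three ingredients the inequality \eqref{eq3.1} is not established.
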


\begin{proof}
(1) Step 1: Replace $\XX$ with a higher resolution and $Z_i$ with their strict transforms. $\beta_i$ are unchanged and $\eta,\xi$ are also unchanged by projection formula. We may assume $\pH$, $\Delta_{i-1}$ and exceptional divisors are simple normal crossing.

Step 2: If $m>\sum_{i=2}^n \frac{1}{\beta_i}$, then
$$m\pH-\frac{1}{\beta_n}\Delta_{n-1}-Z_{n-1}\equiv(m-\frac{1}{\beta_n})\pH$$
is a nef and big $\bQ$-divisor with simple normal crossing fractional part.

By Kawamata-Viehweg vanishing theorem\cite{vanish1,vanish2},
 \begin{align*}
 |K_{\XX}+\lceil m\pH\rceil||_{Z_{n-1}}&\succcurlyeq |K_{\XX}+\lceil m\pH-\frac{1}{\beta_n}\Delta_{n-1}\rceil||_{Z_{n-1}}\\&\succcurlyeq |K_{Z_{n-1}}+\lceil(m\pH-Z_{n-1}-\frac{1}{\beta_n}\Delta_{n-1})|_{Z_{n-1}}\rceil|
 \end{align*}
By induction, for $i=2,\cdots,n-1$, we have
\begin{align*}
    &\indent |K_{\XX}+\lceil m\pH\rceil||_{Z_{i-1}}\\
 &\succcurlyeq |K_{Z_{i}}+
 \lceil m\pH|_{Z_{i-1}}-\sum_{k=i+1}^n(Z_{k-1}+\frac{1}{\beta_k}\Delta_{k-1})\rceil||_{Z_{i-1}}\\
    &\succcurlyeq |K_{Z_{i-1}}+\lceil m\pH|_{Z_{i-1}}-\sum_{k=i}^n(Z_{k-1}+\frac{1}{\beta_k}\Delta_{k-1})|_{Z_{i-1}}\rceil|
 \end{align*}
 
 Step 3: Define
 $$M_m:=\text{Mov}|K_{\XX}+\lceil m\pH\rceil|,$$
 $$P_m:=m\pH|_{Z_1}-\sum_{k=2}^n(Z_{k-1}+\frac{1}{\beta_k}\Delta_{k-1})|_{Z_{1}}.$$
Since
$$P_m\equiv(m-\sum_{k=2}^n\frac{1}{\beta_k})\pH|_{Z_1},$$
we have $\alpha_m=\text{deg}_{Z_1}(P_m)$

By lemma \ref{lem2.1}, 
\begin{equation}\label{eq3.3}
    M_m|_{Z_1}\geq \text{Mov}|K_{Z_1}+\lceil P_m\rceil|
\end{equation}
 for any $m>\sum_{i=2}^n \frac{1}{\beta_i}$. If $\alpha_m>1$, then deg$_{Z_1}(K_{Z_1}+\lceil P_m\rceil)\geq 2g(Z_1)$ and $|K_{Z_1}+\lceil P_m\rceil|$ is base point free. Hence
 $${\rm Mov}|K_{Z_1}+\lceil P_m\rceil|=|K_{Z_1}+\lceil P_m\rceil|.$$
 
On the other hand, we can write $K_{\XX}=\pi^*(K_X)+E-F$ where $E,F$ are effective exceptional $\bQ$-divisors with no common components. Therefore,
\begin{equation*}
    \begin{aligned}
    K_{\XX}+\lceil m\pH\rceil&=\pX+E-F+\lceil m\pH\rceil\\&= \pX+m\pH+E+\{-m\pH\}-F
    \end{aligned}
\end{equation*}

Since $E+\{-m\pH\}$ is an effective exceptional $\bQ$-divisor, by Lemma \ref{lem2.2},
\begin{equation*}
\begin{aligned}
 \pi_*\OO_{\XX}(K_{\XX}+\lceil m\pH\rceil)&\subseteq \pi_*\OO_{\XX}(\pX+m\pH+E+\{-m\pH\})\\&=\OO_X(K_X+mH).
\end{aligned}
\end{equation*}

Therefore, 
\begin{equation}\label{eq3.4}
    M_m\leq \pX+m\pH.
\end{equation}
Taking degree on both sides in \eqref{eq3.3},we conclude
\begin{equation*}
\begin{aligned}
    \eta+m\xi &=((\pX+m\pH)\cdot Z_1)\geq \text{deg}_{Z_1}(M_m)\\&\geq \text{deg}_{Z_1}(K_{Z_1}+\lceil P_m\rceil)=(2g(Z_1)-2)+\lceil{\alpha_m}\rceil.
\end{aligned}
\end{equation*}

(2) This follows from
\begin{equation*}
    \begin{aligned}
H^n & =(\pH)^n=((\pH)^{n-1}\cdot (\beta_nZ_{n-1}+\Delta_{n-1}))\\&\geq ((\pH)^{n-1}\cdot \beta_nZ_{n-1})=\beta_n((\pH)|_{Z_{n-1}})^{n-1}\\&\geq\cdots\geq
\beta_2\beta_3\cdots\beta_n((\pH)\cdot Z_1)=\beta_2\beta_3\cdots\beta_n\xi.
    \end{aligned}
\end{equation*}
The inequalities follows from the nefness of $\pH$.
\end{proof}

\section{Birationality Principle}

In this section we introduce a useful method to prove birationality of a linear system.
\begin{definition}\cite[Definition 2.3]{CZ08}
A \textbf{generic irreducible element} $S$ of a movable linear system $|M|$ on a variety $X$ is a generic irreducible component in a general member of $|M|$. 
\end{definition}
\begin{remark}
By definition one can easily see that 
\begin{enumerate}
\item if dim $\overline{\varphi_{|M|}(X)}\geq 2$, then $S$ is a general member of $|M|$;
\item If dim $\overline{\varphi_{|M|}(X)}=1$ (i.e.$|M|$ is composed with a pencil), then $M\equiv tS$ for some integer $t\geq h^0(M)-1$.
\end{enumerate}
\end{remark}

\begin{definition}\cite[Definition 2.6]{Exp3}
Let $|M|$ be a movable linear system on a variety $X$. We say $|M|$ distinguishes two different generic irreducible elements $S_1,S_2$ if $ \overline{\varphi_{|M|}(S_1)}\not= \overline{\varphi_{|M|}(S_2)}$.
\end{definition}

We will mainly use the following birational principle in \cite[Section 2.7]{Exp2} to bound the birational stability index.

\begin{prop}(Birationality Principle)\label{BP}. Let $D$ and $M$ be two divisors on a smooth projective variety $X$. Assume that $|M|$ is base point free. Take the Stein factorization of $\varphi_{|M|}: X\xrightarrow{f} W\rightarrow \bP^{h^0(X,M)-1}$, where $f$ is a fibration onto a normal variety $W$. For a sublinear system $V \subset |D+M|$, the rational map $\varphi_V$ is birational onto its image if 
one of the following conditions satisfies:
\begin{enumerate}
\item $\dim \varphi_{|M|}\geq 2, |D|\not= \emptyset$ and $\varphi_V|_S$ is birational for a general member $S$ of $|M|$;
\item  $\dim \varphi_{|M|}=1$, $\varphi_V$ distinguishes general fibers of $f$ and $\varphi_V|_F$ is birational for a general fiber $F$ of $f$.
\end{enumerate}
\end{prop}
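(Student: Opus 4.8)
The plan is to reduce the assertion to separating two general points of $X$, since $\varphi_V$ is automatically generically finite once it is known to be birational on a covering family of subvarieties, so only generic injectivity is at stake. First I would pass to a resolution of the base locus of $V$, so as to assume $V=|N|$ is base point free and $\varphi_V$ a morphism; this changes neither $\dim\varphi_{|M|}$, nor the image of $\varphi_{|M|}$ (hence the Stein factorization $f$), nor the restriction hypotheses, and leaves the image of $\varphi_V$ unchanged up to birational equivalence. By Bertini a general member $S\in|M|$ is then smooth, and the hypothesis ``$\varphi_V|_S$ birational'' says $\varphi_V$ is injective on a dense open of such an $S$. Now fix two general points $x_1,x_2\in X$; the argument splits according to whether they lie on a common generic irreducible element of $|M|$.

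In case (1) one has $\dim\varphi_{|M|}\ge2$, so $|M|$ realises $W:=\overline{\varphi_{|M|}(X)}\subset\bP^{N}$ with $N\ge2$ and, $x_1,x_2$ being general and independent, the members of $|M|$ passing through both of them form a nonempty linear subsystem. A general member $S$ of $|M|$ through $x_1$ and $x_2$ is still a generic irreducible element: this follows from Bertini applied to $\varphi_{|M|}$ composed with the linear projection of $\bP^{N}$ away from the line through $\varphi_{|M|}(x_1)$ and $\varphi_{|M|}(x_2)$, whose image still has dimension $\ge2$ because those are general points of $W$. Choosing $S$ in the dense open where $\varphi_V|_S$ is birational and using that $x_1,x_2$ are then general points of $S$ gives $\varphi_V(x_1)\ne\varphi_V(x_2)$. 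The hypothesis $|D|\ne\emptyset$ is there to cover the residual possibility that $x_1,x_2$ can only be separated across two distinct generic irreducible elements $S_1\ne S_2$ of $|M|$: a fixed section of $|D|$ makes $\varphi_{|D+M|}$ dominate $\varphi_{|M|}$, and one checks the same of $\varphi_V$, so that $\varphi_V$ distinguishes general members of $|M|$ and hence separates their general points. Either way $\varphi_V$ separates $x_1$ from $x_2$; since a general $S$ is not contracted and such $S$ sweep out $X$, $\varphi_V$ is birational onto its image.

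In case (2) one has $\dim\varphi_{|M|}=1$, so $W$ is a curve, $f$ is the given fibration, $|M|$ is composed with the pencil of fibers of $f$, and its generic irreducible element is a general fiber $F$ (with $M\equiv tF$). If the general points $x_1,x_2$ lie on the same fiber $F$, they are separated because $\varphi_V|_F$ is birational; if they lie on distinct general fibers $F_1\ne F_2$, they are separated because $\varphi_V$ distinguishes general fibers, so $\overline{\varphi_V(F_1)}\ne\overline{\varphi_V(F_2)}$ and the general point $x_1\in F_1$ maps off $\overline{\varphi_V(F_2)}$. In both cases $\varphi_V(x_1)\ne\varphi_V(x_2)$, which as above yields birationality of $\varphi_V$ onto its image; combining the two cases with the Stein factorization of $\varphi_{|M|}$ gives the statement, as in \cite[Section 2.7]{Exp2}. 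The conceptual skeleton is short, and I expect the genuine difficulty to be entirely in the generality bookkeeping: arranging a resolution preserving all relevant data, verifying that a general member of $|M|$ through two general points stays a generic irreducible element when $\dim\varphi_{|M|}\ge2$, and pinning down precisely how $|D|\ne\emptyset$ forces $\varphi_V$ to distinguish general members of $|M|$ in case (1) — which for a proper sublinear system $V\subsetneq|D+M|$ is not formal.
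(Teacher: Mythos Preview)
The paper does not prove this proposition; it quotes it from \cite[Section 2.7]{Exp2} and uses it as a black box. Your sketch is essentially the standard argument found there---reduce to separating two general points, then in each case either place them on a common generic irreducible element and invoke the birationality of $\varphi_V$ restricted to it, or separate them across distinct elements---so there is nothing within this paper to compare against. The points you flag at the end (the generality bookkeeping for a member of $|M|$ through two prescribed general points, and the precise mechanism by which $|D|\ne\emptyset$ constrains an arbitrary sublinear system $V$) are exactly where a careful write-up would spend its effort; in the applications later in the paper $V$ is always the full system $|D+M|$, which sidesteps the second concern.
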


\section{Proof of Theorem \ref{mainthm1}}
In this section we assume that $l$ is a natural number and that $X$ is a weak Fano variety with $$\dim \overline{\varphi_{-lK_X}(X)}=d\geq n-1.$$ 

In the setting of Theorem \ref{thm3.1}, let $H=-K_X$ and $\pi:\XX \to X$ be a sufficiently high resolution such that $\pmlX=M+F$, where $|M|$ is the base point free movable part, $F$ is the fixed part. Take $Z_n=\XX$.

Inductively, for $n-d+2\leq k \leq n$ we can assume $Z_{k-1}$ as a generic irreducible element of $|M|_{Z_k}|$. By Bertini's theorem, we have the following chain of smooth projective subvarieties:
\begin{equation}\label{chain1}
    Z_{n-d+1}\subset \cdots \subset Z_{n-1} \subset Z_n=\XX.
\end{equation}

In Theorem \ref{thm3.1}, conditions (i)(ii) are satisfied since $Z_{k-1}$ is general. Moreover, we have $\eta=-\xi$,  $\beta_i=\frac{1}{l}$ for $i\geq n-d+2$.

\subsection{Case 1: \texorpdfstring{$\dim \overline{\varphi_{-lK_X}(X)}=n$}.}\label{subsection5.1}
In this case, \eqref{chain1} becomes
\begin{equation}\label{chain2}
    Z_{1}\subset \cdots \subset Z_{n-1} \subset Z_n=\XX.
\end{equation}

Step 1: If $M^n\geq2$, then $\xi=(\pmX\cdot Z_1)\geq\frac{1}{l} (M|_{Z_2}\cdot Z_1)=\frac{1}{l}(M|_{Z_2})^2=\frac{1}{l}M^n\geq\frac{2}{l}$.

If $M^n=1$, since $$M^n={\rm deg}\varphi_M\cdot {\rm deg}\overline{\varphi_M(\XX)},$$
we have ${\rm deg}\varphi_M={\rm deg}\varphi_M(\XX)=1$. Hence $h^0(M)=n+1$ and $\varphi_M:\XX \rightarrow \bP^n$ is a birational morphism. Therefore $Z_1$ is birational to $\bP^1$, which is also $\bP^1$.

Since $X$ has at worst canonical singularities, we can write $$K_{\XX}\equiv\pX+E,$$ where $E$ is an effective exceptional $\bQ$-divisor. By adjunction formula, 
$$K_{Z_1}=(K_{\XX}+(n-1)M)|_{Z_1}\equiv(\pX+E+(n-1)M)|_{Z_1}.$$
Taking degree on both sides, 
$$-2={\rm deg}K_{Z_1}=\pX\cdot Z_1+E\cdot Z_1+(n-1)M^n.$$
Since $Z_1$ is a general member of a covering family, $E\cdot Z_1\geq0$. Hence
$$\xi=\pmX\cdot Z_1=2+E\cdot Z_1+(n-1)M^n\geq n+1.$$

In both cases, $\xi\geq\frac{2}{l}$. By Theorem \ref{thm3.1}, vol$(-K_X)\geq \beta_2\beta_3\cdots\beta_n\xi\geq\frac{2}{l^n}$.

Step 2: Moreover, if $H^0(X,-K_X)\neq0$, then taking $m\geq l$ and applying Proposition \ref{BP} on chain \eqref{chain2} inductively,
\begin{equation*}
\begin{aligned}
\varphi_{|m\pmX|} \text{ is birational}& \iff  \varphi_{|m\pmX||_{Z_{n-1}}} \text{is birational}\\& \iff \cdots \iff \varphi_{|m\pmX||_{Z_1}} \text{is birational}
\end{aligned}
\end{equation*}

By \eqref{eq3.3} and \eqref{eq3.4}, $\varphi_{|m\pmX||_{Z_1}}$ is birational if and only if $\varphi_{|K_{Z_1}+\lceil P_{m+1}\rceil|}$ is birational. This is true when $\alpha_{m+1}>2$. Hence $\varphi_{|m\pmX|}$ is birational if $m\geq nl$, which implies that $r_s(-K_X)\leq nl$.

\subsection{Case 2: \texorpdfstring{$\dim \overline{\varphi_{-lK_X}(X)}=n-1$}.}\label{Fanocase2}
In this case, \eqref{chain1} becomes
\begin{equation}\label{chain3}
    Z_2\subset \cdots \subset Z_{n-1} \subset Z_n=\XX.
\end{equation}

Step 1: Augment the chain \eqref{chain3} by taking $Z_1$ to be a generic irreducible element of $|M|_{Z_2}|$. Then \eqref{chain3} becomes
\begin{equation}\label{chain4}
    Z_1\subset Z_2\subset \cdots \subset Z_{n-1} \subset Z_n=\XX
\end{equation}
and $l\beta_2\geq h^0(M)-n+1\geq 1$.

Step 2: In this step we consider if $H^0(X,-K_X)\neq0$, when $\varphi_{-mK_X}$ becomes birational. Similar to the step 2 in subsection \ref{subsection5.1}, taking $m\geq l$ and applying Proposition \ref{BP} on  chain \eqref{chain4} inductively, $\varphi_{-mK_X}$ is birational if and only if
\begin{enumerate}
\item[(I)] $\varphi_{|m\pmX||_{Z_2}}$ distinguishes different generic irreducible elements of $\varphi_{|M||_{Z_2}}$;
\item[(II)] $\varphi_{|m\pmX||_{Z_1}}$ is birational.
\end{enumerate}

Condition (II) is equivalent to $\alpha_{m+1}>2$. Hence it is sufficient to consider Condition (I).

 If $l\beta_2=1$, then it is satisfied when $m\geq l$ since $$|m\pmX||_{Z_2}\succcurlyeq|M||_{Z_2}.$$ 
 
 If $l\beta_2\geq 2$, choose two different generic irreducible elements $C_1$,$C_2$ of $|M||_{Z_2}$. Then $M|_{Z_2}-C_1-C_2\equiv(l\beta_2-2)Z_1$ is nef. Therefore, for $m>n-2$, by Kawamata-Viehweg vanishing theorem \cite{vanish2,vanish1} we conclude
\begin{align}
&\indent |K_{\XX}+\lceil(m+1)\pmX\rceil||_{Z_2}\notag
\\&\succcurlyeq|K_{\XX}+\lceil(m+1-(n-1)l)\pmX\rceil+\lceil(n-1)\pmlX\rceil||_{Z_2}\notag
\\&\succcurlyeq|K_{\XX}+\lceil(m+1-(n-1)l)\pmX\rceil+(n-1)M_1||_{Z_2}\notag
\\&\succcurlyeq|K_{Z_2}+\lceil(m+1-(n-1)l)\pmX|_{Z_2}\rceil+M_1|_{Z_2}|
\end{align}

and the surjective map:
\begin{align}
    &\indent H^0(Z_2,K_{Z_2}+\lceil(m+1-(n-1)l)\pmX|_{Z_2}\rceil+M_1|_{Z_2})\notag
    \\&\rightarrow H^0(C_1,K_{C_1}+D_1)\oplus H^0(C_2,K_{C_2}+D_2)
\end{align}
where
\begin{align*}
    D_i:&=(\lceil(m+1-(n-1)l)\pmX|_{Z_2}\rceil+M_1|_{Z_2}-C_i)|_{C_i}\\&=\lceil(m+1-(n-1)l)\pmX|_{Z_2}\rceil|_{C_i}
\end{align*}
for $i=1,2$.

If deg$_{C_i}D_i\geq (m+1-(n-1)l)(\pmX\cdot C_i)=(m+1-(n-1)l)\xi>1$, then $H^0(C_i,K_{C_i}+D_i)\not= 0$,
which implies that $|K_{\XX}+\lceil(m+1)\pmX\rceil||_{Z_2}$ can distinguish different generic irreducible elements of $|M||_{Z_2}$. By \eqref{eq3.4}, $|m\pmX||_{Z_2}$ can also distinguish different generic irreducible elements of $|M||_{Z_2}$. Hence Condition (I) is satisfied in this case.

In summary, if 
$$H^0(X,-K_X)\neq0,$$ 
$$\alpha_{m+1}=(m+1-\sum_{i=2}^n \frac{1}{\beta_i})\xi>2,$$  $$(m+1-(n-1)l)\xi>1,$$ then $\varphi_{|-mK_X|}$ is birational. 

\subsubsection{Subcase 1: \texorpdfstring{$g(Z_1)=0$}.}
Since $((K_{\XX}+(n-2)M)|_{Z_2}+Z_1)|_{Z_1}=K_{Z_1}$ and $M|_{Z_1}\sim 0, Z_1|_{Z_1}\sim 0$, we conclude that $K_{\XX}\cdot Z_1=-2$. Write $K_{\XX}=\pi^*(K_X)+E$, where $E$ is an effective exceptional $\bQ$-divisor. Since $Z_1$ is a generic irreducible element of $|M||_{Z_2}$, We have
\begin{equation*}
    \xi=\pmX \cdot Z_1=2+Z_1\cdot E\geq 2.
\end{equation*}

Therefore, $\vol(-K_X)\geq \beta_2\cdots\beta_n\xi \geq\frac{2}{l^{n-1}}$. Moreover, if $H^0(X,-K_X)\neq0$, let $m\geq (n-1)l+1$, then $\alpha_{m+1}>2$ and $(m+1-(n-1)l)\xi>1$, which implies that $\varphi_{-mK_X}$ is birational. Hence we have $r_s(-K_X)\leq (n-1)l+1$.

\subsubsection{Subcase 2: \texorpdfstring{$g(Z_1)\geq1$}.} Since $\xi>0$, we can assume $\frac{1}{t}<\xi \leq \frac{1}{t-1}$ for some $t\in \bN$. We will repeatedly use \eqref{eq3.1} in Theorem \ref{thm3.1} to estimate the lower bound of $\xi$.

Let $m=t+(n-1)l$, then $\alpha_m>1$. By \eqref{eq3.1},
\begin{equation*}
    \xi \geq \frac{2}{t+(n-1)l-1}.
\end{equation*}
Hence we have $\frac{2}{t+(n-1)l-1}\leq \frac{1}{t-1}$ which implies $t \leq (n-1)l+1$. Therefore, $\xi>\frac{1}{(n-1)l+1}$.

Let $m=2(n-1)l+1$, then $\alpha_m>1$. By \eqref{eq3.1}, 
\begin{equation*}
    \xi \geq \frac{1}{(n-1)l}.
\end{equation*}

Therefore, vol$(-K_X)\geq \beta_2\cdots\beta_n\xi \geq\frac{1}{(n-1)l^n}$. Moreover, if $H^0(X,-K_X)\neq0$, let $m\geq 3(n-1)l$, then $\alpha_{m+1}>2$ and $(m+1-(n-1)l)\xi>1$, which implies that $\varphi_{-mK_X}$ is birational. Hence we have $r_s(-K_X)\leq 3(n-1)l$.

\section{Proof of Theorem \ref{mainthm2}}
In this section we assume that $l$ is a natural number, $X$ is a Calabi-Yau variety with at worst $\bQ$-factorial canonical singularities, and $H$ is an ample Weil divisor such that $$\dim \overline{\varphi_{lH}(X)}=d\geq n-1.$$ The case of polarised Calabi-Yau variety is almost the same with the case of weak Fano variety. In the setting of Theorem \ref{thm3.1}, let $\pi:\XX \to X$ be a sufficiently high resolution such that $\plH=M+F$, where $|M|$ is the base point free movable part, $F$ is the fixed part. Take $Z_n=\XX$.

Inductively, for $n-d+2\leq k \leq n$ assume $Z_{k-1}$ as a generic irreducible element of $|M|_{Z_k}|$. By Bertini's theorem, we have the following chain of smooth projective subvarieties:
\begin{equation}\label{cchain1}
    Z_{n-d+1}\subset \cdots \subset Z_{n-1} \subset Z_n=\XX.
\end{equation}

In Theorem \ref{thm3.1}, conditions (i)(ii) are satisfied since $Z_{k-1}$ is general. Moreover, we have $\beta_i=\frac{1}{l}$ for $i\geq n-d+2$ and $\eta=0$ since $K_X\equiv 0$.

\subsection{Subcase 1: \texorpdfstring{$\dim \overline{\varphi_{lH}(X)}=n$}.}
In this case, \eqref{cchain1} becomes
\begin{equation}\label{cchain2}
    Z_{1}\subset \cdots \subset Z_{n-1} \subset Z_n=\XX.
\end{equation}

Step 1: First we show that $M^n\geq2$. Otherwise assume $M^n=1$, then from $$M^n={\rm deg}\varphi_M\cdot {\rm deg}\overline{\varphi_M(\XX)},$$ we have ${\rm deg}\varphi_M={\rm deg}\overline{\varphi_M(\XX)}=1$. Therefore $h^0(M)=n+1$ and $\varphi_M:\XX \rightarrow \bP^n$ is a birational morphism. Hence $Z_1$ is birational to $\bP^1$, which is also $\bP^1$.

Since $X$ has at worst canonical singularities, we can write $$K_{\XX}\equiv\pX+E\equiv E$$ where $E$ is an effective exceptional $\bQ$-divisor. By adjunction formula
$$K_{Z_1}=(K_{\XX}+(n-1)M)|_{Z_1}\equiv(E+(n-1)M)|_{Z_1}.$$
Since $Z_1$ is a general member of a covering family, we conclude $E\cdot Z_1\geq0$. 

Taking degree on both side, 
$$\deg K_{Z_1}=E\cdot Z_1+(n-1)M^n\geq0,$$
which is a contradiction to $Z_1\cong \bP^1$.

Hence $M^n\geq2$ and $\xi=\pH\cdot Z_1\geq \frac{1}{l}M\cdot Z_1=\frac{1}{l}M^n\geq\frac{2}{l}$. By Theorem \ref{thm3.1} we get $\vol(H)\geq \beta_2\beta_3\cdots\beta_n\xi\geq\frac{2}{l^n}$.

Step 2: Moreover, if $H^0(X,H)\neq0$, then taking $m\geq l$ and applying Proposition \ref{BP} on  chain \eqref{cchain2} inductively,
\begin{equation*}
\begin{aligned}
\varphi_{|m\pH|} \text{ is birational}& \iff  \varphi_{|m\pH||_{Z_{n-1}}} \text{is birational}\\& \iff \cdots \iff \varphi_{|m\pH||_{Z_1}} \text{is birational}
\end{aligned}
\end{equation*}

By \eqref{eq3.3} and \eqref{eq3.4}, $\varphi_{|m\pH||_{Z_1}}$ is birational if and only if $\varphi_{|K_{Z_1}+\lceil P_{m}\rceil|}$ is birational. This is true when $\alpha_{m}>2$. Hence $\varphi_{|m\pH|}$ is birational if $m\geq nl+1$, which implies that $r_s(H)\leq nl+1$.

\subsection{Subcase 2: \texorpdfstring{$\dim \overline{\varphi_{lH}(X)}=n-1$}.}
In this case, \eqref{cchain1} becomes:
\begin{equation}\label{cchain3}
    Z_2\subset \cdots \subset Z_{n-1} \subset Z_n=\XX.
\end{equation}

Step 1: Augment the chain \eqref{cchain3} by taking $Z_1$ to be a generic irreducible element of $|M|_{Z_2}|$. Then \eqref{cchain3} becomes
\begin{equation}\label{cchain4}
    Z_1\subset Z_2\subset \cdots \subset Z_{n-1} \subset Z_n=\XX
\end{equation}
and $l\beta_2\geq h^0(M)-n+1\geq 1$.

Since $X$ has at worst canonical singularities, $$K_{\XX}\equiv\pX+E\equiv E$$ where $E$ is an effective exceptional $\bQ$-divisor. Hence by adjunction formula and $M|_{Z_1}\sim 0, Z_1|_{Z_1}\sim 0$ we have 
$$K_{Z_1}=((K_{\XX}+(n-2)M)|_{Z_2}+Z_1)|_{Z_1}\equiv E|_{Z_1}.$$ Since $Z_1$ is a general member of a covering family of curves, we conclude $\deg K_{Z_1}=\deg E|_{Z_1}\geq0$. Hence $g(Z_1)\geq1$.

Step 2: In this step we consider if $H^0(X,H)\neq0$, when $\varphi_{mH}$ becomes birational. The strategy is the same with the Step 2 in subsection \ref{Fanocase2}, applying Proposition \ref{BP} on chain \eqref{cchain4} inductively, $\varphi_{mH}$ is birational if and only if
\begin{enumerate}
\item[(I)] $\varphi_{|m\pH||_{Z_2}}$ distinguishes different generic irreducible elements of $\varphi_{|M||_{Z_2}}$;
\item[(II)]$\varphi_{|m\pH||_{Z_1}}$ is birational.
\end{enumerate}
 Condition (II) is equivalent to $\alpha_{m}>2$. Hence it is sufficient to consider Condition (I).
 If $l\beta_2=1$, then it is satisfied when $m\geq l$ since $$\varphi_{|m\pH||_{Z_2}}\succcurlyeq|M||_{Z_2} .$$ 
 
 If $l\beta_2\geq 2$, choose two different generic irreducible elements $C_1$,$C_2$ of $|M||_{Z_2}$. Then $M|_{Z_2}-C_1-C_2\equiv(l\beta_2-2)Z_1$ is nef. Therefore, for $m>(n-1)l$, by Kawamata-Viehweg vanishing theorem we conclude
\begin{align}
&\indent |K_{\XX}+\lceil m\pH\rceil||_{Z_2}\notag
\\&\succcurlyeq|K_{\XX}+\lceil(m-(n-1)l)\pH\rceil+\lceil(n-1)\plH\rceil||_{Z_2}\notag
\\&\succcurlyeq|K_{\XX}+\lceil(m-(n-1)l)\pH\rceil+(n-1)M_1||_{Z_2}\notag
\\&\succcurlyeq|K_{Z_2}+\lceil(m-(n-1)l)\pH|_{Z_2}\rceil+M_1|_{Z_2}|
\end{align}
and the surjective map
\begin{align}
    &\indent H^0(Z_2,K_{Z_2}+\lceil(m-(n-1)l)\pH|_{Z_2}\rceil+M_1|_{Z_2})\notag
    \\&\rightarrow H^0(C_1,K_{C_1}+D_1)\oplus H^0(C_2,K_{C_2}+D_2)
\end{align}
where
\begin{align*}
    D_i:&=(\lceil(m-(n-1)l)\pH|_{Z_2}\rceil+M_1|_{Z_2}-C_i)|_{C_i}\\&=\lceil(m-(n-1)l)\pH|_{Z_2}\rceil|_{C_i}
\end{align*}
for $i=1,2$.

Since $\deg_{C_i}D_i\geq (m-(n-1)l)(\pH\cdot C_i)=(m-(n-1)l)\xi>0$ and $g(C_i)\geq 1$, we conclude $H^0(C_i,K_{C_i}+D_i)\not= 0$. Hence 
$|K_{\XX}+\lceil(m\pH\rceil||_{Z_2}$ can distinguish different generic irreducible elements of $|M||_{Z_2}$. By \eqref{eq3.4}, $|m\pmX||_{Z_2}$ can also distinguish different generic irreducible elements of $|M||_{Z_2}$. Condition (II) is satisfied in this case.

In summary, if 
$$H^0(X,H)\neq0,$$
$$\alpha_{m}=(m-\sum_{i=2}^n \frac{1}{\beta_i})\xi>2,$$ then $\varphi_{|mH|}$ is birational. 

Since $\xi>0$, we can assume $\frac{1}{t}<\xi \leq \frac{1}{t-1}$ for some $t\in \bN$. We will repeatedly use \eqref{eq3.1} in Theorem \ref{thm3.1} to estimate the lower bound of $\xi$. Recall that $g(Z_1)\geq1$.

Let $m=t+(n-1)l$, then $\alpha_m>1$. By \eqref{eq3.1},
\begin{equation*}
    \xi \geq \frac{2}{t+(n-1)l}.
\end{equation*}
Hence we have $\frac{2}{t+(n-1)l}\leq \frac{1}{t-1}$ which implies $t \leq (n-1)l+2$. Therefore, $\xi>\frac{1}{(n-1)l+2}$.

Let $m=2(n-1)l+2$, then $\alpha_m>1$. By \eqref{eq3.1}, 
\begin{equation*}
    \xi \geq \frac{1}{(n-1)l+1}.
\end{equation*}

Therefore, vol$(H)\geq \beta_2\cdots\beta_n\xi \geq\frac{1}{((n-1)l+1)l^{n-1}}$. Moreover, if $H^0(X,H)\neq0$, let $m\geq 3(n-1)l+3$, then we have $\alpha_{m}>2$, hence $\varphi_{mH}$ is birational. Therefore we have $r_s(H)\leq 3(n-1)l+3$.
\section{Example}

In this section we give some examples to show that in Theorem \ref{mainthm1} and Theorem \ref{mainthm2}, if $l=1$, then our estimations are optimal.

\begin{prop}\cite[Theorem 4.11]{YPG}\label{prop7.1}A cyclic quotient singularity of type $\frac{1}{r}(a_1,\cdots, a_n)$ is canonical if and only if 
$$\frac{1}{r}\sum_k\overline{ka_k}\geq1$$ for $k=1,2,\cdots, r-1$. Here $\overline{ka_k}$ denotes smallest residue of $ka_k$ mod $r$.
\end{prop}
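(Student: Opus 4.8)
The plan is to pass to the toric description of the singularity and apply the discrepancy formula for toric divisors. Write $X=\bA^n/\mu_r$, where a fixed primitive $r$-th root of unity $\zeta$ acts by $\zeta\cdot(x_1,\dots,x_n)=(\zeta^{a_1}x_1,\dots,\zeta^{a_n}x_n)$. After removing pseudo-reflections by Chevalley--Shephard--Todd (which leaves $X$ unchanged and, for the stated criterion, is the standard reduction to a small action), $X$ is $\bQ$-Gorenstein; in fact $X$ is the affine toric variety of the cone $\sigma=\bR_{\geq 0}^n$ inside the overlattice $N=\bZ^n+\bZ\cdot\tfrac{1}{r}(a_1,\dots,a_n)$, whose torus-invariant prime divisors $D_1,\dots,D_n$ correspond to the rays $\bR_{\geq 0}e_i$. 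Since $K_X=-\sum_{i=1}^nD_i$, the canonical class is represented by the support function $\psi$ that is $\bQ$-linear on $\sigma$ with $\psi(e_i)=1$, i.e. $\psi(v)=v_1+\dots+v_n$ for $v=(v_1,\dots,v_n)\in\sigma$.

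The first main step is the standard toric discrepancy computation: every toric partial resolution of $X$ is a subdivision of $\sigma$, and if a new ray is generated by a primitive $v\in N\cap\sigma$, then the corresponding exceptional prime divisor $E_v$ has discrepancy $a(E_v,X)=\psi(v)-1$. Because log discrepancies of a toric pair are computed by toric valuations, and because canonicity of a $\bQ$-Gorenstein variety can be tested on a single log resolution (which here may be chosen toric), it follows that $X$ is canonical (resp. terminal) if and only if $\psi(v)\geq 1$ (resp. $>1$) for every primitive $v\in N\cap\sigma$ with $v\notin\{e_1,\dots,e_n\}$. Thus the problem reduces to minimizing $\psi$ over such lattice points.

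The combinatorial step is then routine. Identify $N/\bZ^n\cong\bZ/r$ and, for $k=1,\dots,r-1$, let $v_k:=\tfrac{1}{r}(\overline{ka_1},\dots,\overline{ka_n})$ be the unique representative of the class of $k$ lying in the half-open cube $[0,1)^n\subset\sigma$, so that $\psi(v_k)=\tfrac{1}{r}\sum_{i=1}^n\overline{ka_i}$. Every point of $N\cap\sigma$ in the class of $k$ is $v_k+m$ for some $m\in\bZ_{\geq 0}^n$, hence has $\psi$-value $\psi(v_k)+(m_1+\dots+m_n)\geq\psi(v_k)$, and every nonzero integral point of $\sigma$ has $\psi$-value $\geq 1$, with equality exactly for the $e_i$. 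Therefore
\[
\min\bigl\{\,\psi(v)-1 \;:\; v\in N\cap\sigma,\ v\notin\{e_1,\dots,e_n\}\,\bigr\}=\min_{1\leq k\leq r-1}\Bigl(\tfrac{1}{r}\sum_{i=1}^n\overline{ka_i}-1\Bigr),
\]
the right-hand side being read as $\geq 0$ when $r=1$. Combined with the previous step, $X$ is canonical if and only if $\tfrac{1}{r}\sum_{i=1}^n\overline{ka_i}\geq 1$ for every $k=1,\dots,r-1$, which is the assertion; terminality is the same statement with strict inequalities.

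I expect the genuine obstacle to lie in the first step rather than in the bookkeeping: one must pin down the normalization in the toric discrepancy formula and justify that it suffices to test on toric valuations, and one must handle the mild subtlety that $v_k$ may lie on a proper face of $\sigma$ when some $\overline{ka_i}=0$ — in that case $E_{v_k}$ is exceptional over a lower-dimensional orbit closure, but it still contributes to the canonicity condition and is realized by the star subdivision along the ray $\bR_{\geq 0}v_k$. Similarly, the reduction to a small action should be spelled out, since the displayed inequalities are equivalent to canonicity only after pseudo-reflections have been removed. As this is a classical theorem of Reid and Tai, in the paper I would simply cite \cite{YPG}; the argument above is the self-contained toric proof.
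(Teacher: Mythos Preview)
The paper does not prove this proposition at all: it is quoted verbatim from Reid's \emph{Young Person's Guide} and used only as a black box to check that the hypersurface examples in Section~7 have canonical singularities. So there is no ``paper's own proof'' to compare against; your final sentence already anticipates this.

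That said, what you wrote is the standard and correct argument (the Reid--Tai criterion via the toric discrepancy formula). The reduction to testing $\psi(v)\geq 1$ on primitive lattice points of the cone, and the identification of the minimizers with the cube representatives $v_k$, are both right. Your caveats are well placed: the statement as literally written presupposes that the $\mu_r$-action is faithful and free of pseudo-reflections (otherwise some $v_k$ may be zero or lie on a facet and the inequality can fail formally while the quotient is still smooth or canonical); and the assertion that toric valuations suffice to detect canonicity does need the existence of a toric log resolution, which is classical. For the purposes of this paper none of this matters, since the only singularities checked are of type $\tfrac{1}{r}(1^{(n)})$, where the action is small and the criterion reduces to $n/r\geq 1$.
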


\begin{example}
Given $n\geq2$, consider the general hypersurface 
$$X=V_{2n}\subset \bP(1^{(n+1)},n).$$ $X$ is smooth and $\dim\overline{\varphi_{-K_X}(X)}= n$. Since $\omega_X\cong \OO_X(-1)$, $X$ is a weak Fano variety. We have $\vol(-K_X)=2$ and $r_s(-K_X)=n$.
\end{example}

\begin{example}
Given $n\geq2$, consider the general hypersurface 
$$X=V_{6(n-1)}\subset \bP(1^{(n)},2(n-1),3(n-1)).$$ $\dim\overline{\varphi_{-K_X}(X)}= n-1$. $X$ has one cyclic quotient singularity of type $\frac{1}{n-1}(1^{(n)})$, which is canonical by Proposition \ref{prop7.1}. Since $\omega_X\cong \OO_X(-1)$, $X$ is a weak Fano variety. We have $\vol(-K_X)=\frac{1}{n-1}$ and $r_s(-K_X)=3(n-1)$. 
\end{example}

\begin{example}
Given $n\geq2$, consider the general hypersurface
$$X=V_{2n+2}\subset \bP(1^{(n+1)},n+1)$$ and take $H\cong \OO_X(1)$. $X$ is smooth and $\omega_X\cong \OO_X$. Hence $(X,H)$ is a polarised Calabi-Yau variety and $\dim\overline{\varphi_{H}(X)}= n$. We have $\vol(H)=2$ and $r_s(H)=n+1$.

\end{example}

\begin{example}
Given $n\geq2$, consider the general hypersurface 
$$X=V_{6n}\subset \bP(1^{(n)},2n,3n)$$
and take $H\cong \OO_X(1)$. $\omega_X\cong \OO_X$ and $X$ has cyclic quotient singularities of type $\frac{1}{n}(1^{(n)})$ , which is canonical by Proposition \ref{prop7.1}. Hence $(X,H)$ is a polarised Calabi-Yau variety and $\dim\overline{\varphi_{H}(X)}= n-1$. We have $\vol(H)=\frac{1}{n}$ and $r_s(H)=3n$. 

\end{example}

\section*{Acknowledgments} The author expresses his gratitude to his advisor Professor Meng Chen for his great support and encouragement. The author would also like to thank Yu Zou, Hexu Liu and Mengchu Li for useful discussions.

\bibliographystyle{alpha}
\bibliography{main}

\end{document}